 \newtheorem{thm}{Theorem}[section]
 \newtheorem{cor}[thm]{Corollary}
 \newtheorem{lem}[thm]{Lemma}
 \theoremstyle{definition}
 \theoremstyle{remark}
 \newtheorem{rem}[thm]{Remark}
\newcommand{\A}{A_{sa}}
\begin{document}

\title[Continuous bilinear maps on Banach $\star$-algebras]
 {Continuous bilinear maps on Banach $\star$-algebras}

\author{ B. Fadaee}
\thanks{{\scriptsize
\hskip -0.4 true cm \emph{MSC(2020)}:  46K05, 47B48, 15A86.
\newline \emph{Keywords}: Banach $\star$-algebra,bilinear map. \\}}

\address{Department of Mathematics, Faculty of Science, University of Kurdistan, P.O. Box 416, Sanandaj, Kurdistan, Iran.}

\email{behroozfadaee@yahoo.com}

\address{}

\email{}

\thanks{}

\thanks{}

\subjclass{}

\keywords{}

\date{}

\dedicatory{}

\commby{}


\begin{abstract}
Let $A$ be a unital Banach $\star $-algebra with unity $1$, $X$ be a Banach space and $ \phi : A \times A \to X  $ be a continuous bilinear map. We characterize the structure of $\phi$ where it satisfies any of the following properties:
\[ a,b \in A, \,\,\, a b^\star = z \, \,(a^\star b=z)\Rightarrow \phi ( a , b^\star ) = \phi ( z, 1 ) \, \, (\phi ( a^\star  , b) = \phi ( z, 1 ));\] 
\[ a,b \in A, \,\,\, a b^\star = z \, \, (a^\star b=z)\Rightarrow \phi ( a , b^\star ) = \phi ( 1, z ) \, \, (\phi ( a^\star  , b) = \phi ( 1, z )),\] 
where $z\in A$ is fixed. 
\end{abstract}

\maketitle
\section{Introduction}
In recent years, several authors studied the linear (additive) maps that behave like homomorphisms, derivations or right (left) centalizers when acting on special products (for instance, see \cite{Al1, barar, Br, che, fad0, fad, fad2, fos, gh5, gh6} and the references therein). The above questions and the question of characterizing linear maps that preserve special products on algebras can be solved by considering bilinear maps that preserve certain product  properties. Motivated by these reasons, Bre\v{s}ar et al. \cite{Br2} introduced the concept of zero product (resp., Jordan product, Lie product) determined algebras. In the continuation of this discussion, the problem of characterizing bilinear maps at specific products was considered. We refer the reader to \cite{Al2, gh1, gh2, gh3, gh4, gh44, wa} and references therein for results concerning characterizing bilinear maps through special products. With the idea of the above, in this article we will characterize continuous bilinear maps on Banach $\star$-algebras through special products based on the action of the involution. Our results can be useful in studying the structure of Banach $\star$-algebras. Proving our main result is also technical and it is based on complex analysis. In the second section, some preliminaires and necessary tools are presented. The third section contains the main results of the article.
\section{Preliminaires}
Let $A$ be a Banach $\star$-algebra. In this article, we will consider the following sets, which are defined based on specific multiplications.
\[ S_A^{r \star} (z) = \{ (a , b ) \in A \times A : a b^\star = z \}, \]
\[ S_A^{l \star} (z) = \{ (a , b ) \in A \times A : a^\star b = z \}, \]
where $z\in A$ is a fixed point. 
\par 
In order to prove our results we need the following lemmas from the complex analysis, see \cite{Silver}.
\begin{lem} \label{lem1}
If series $ \sum_{n=0}^\infty a_n z^n $ converges for all real values of $ z $, then it converges for all $ z \in \mathbb{C} $.
\end{lem}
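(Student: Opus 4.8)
The plan is to reduce the statement to the basic theory of the radius of convergence of a power series. Recall that to the power series $\sum_{n=0}^{\infty} a_n z^n$ one associates a radius of convergence $R \in [0,\infty]$, characterized by the fact that the series converges absolutely whenever $|z| < R$ and diverges whenever $|z| > R$ (the value of $R$ is given by the Cauchy--Hadamard formula $1/R = \limsup_{n\to\infty} |a_n|^{1/n}$, but we will not need the formula itself).

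First I would observe that the hypothesis forces $R = \infty$. Indeed, if $R$ were finite, then the series would diverge at the real point $z = R + 1$, contradicting the assumption that $\sum_{n=0}^{\infty} a_n z^n$ converges at \emph{every} real value of $z$. Hence $R = \infty$, and then for an arbitrary $z \in \mathbb{C}$ we have $|z| < \infty = R$, so the series converges (in fact absolutely) at $z$. This finishes the argument.

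If one prefers to avoid invoking the theory of $R$, the same conclusion can be reached by a direct comparison argument, which I would present instead. Fix any real number $t > 0$; since $\sum_{n=0}^{\infty} a_n t^n$ converges, its terms tend to $0$, so there is a constant $M = M(t)$ with $|a_n| t^n \le M$ for all $n$. Now given $z \in \mathbb{C}$, pick a real $t > |z|$; then $|a_n z^n| \le M (|z|/t)^n$, and $\sum_{n=0}^{\infty} M (|z|/t)^n$ converges as a geometric series with ratio $|z|/t < 1$. By comparison, $\sum_{n=0}^{\infty} a_n z^n$ converges absolutely, and since $z \in \mathbb{C}$ was arbitrary we are done. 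There is no real obstacle in either approach; the only point requiring a little care is to choose the auxiliary real value strictly larger than $|z|$ so that the comparison ratio is genuinely less than $1$.
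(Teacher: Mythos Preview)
Your proof is correct; either the radius-of-convergence argument or the direct comparison argument you give is a standard and complete justification of the lemma. The paper itself does not supply a proof of this statement at all: it merely states the lemma as a known fact from complex analysis and cites the textbook of Ponnusamy and Silverman, so there is no ``paper's proof'' to compare against beyond noting that your argument is exactly the kind of elementary power-series reasoning one would find in such a reference.
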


\begin{lem} \label{lem2}
Suppose that 
\begin{enumerate}
\item[(a)]
a function $ f $ is analytic throughout a domain $ D $;
\item[(b)]
$ f(z) = 0  $ at each point $ z $ of a domain or line segment contained in $ D $.
\end{enumerate}
Then $ f(z) \equiv 0 $ in $ D $; that is, $ f(z) $ is identically equal to zero throughout $ D $.
\end{lem}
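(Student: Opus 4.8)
This is the identity theorem for analytic functions (cf.\ \cite{Silver}), and I would prove it by the standard ``open-and-closed'' argument. Write $f(z)=\sum_{n\ge0}c_n(z-w)^n$ for the Taylor expansion of $f$ about a point $w\in D$, valid on any disc about $w$ contained in $D$, so that $c_n=f^{(n)}(w)/n!$; recall also that every $f^{(n)}$ is again analytic, hence continuous, on $D$. The first step is to reduce hypothesis (b) to the single statement that there is a point $z_0\in D$ with $f^{(n)}(z_0)=0$ for all $n\ge0$. If $f\equiv0$ on a subdomain $G\subseteq D$, take any $z_0\in G$: since $G$ is open, $f$ vanishes on a neighbourhood of $z_0$, and differentiating repeatedly gives $f^{(n)}(z_0)=0$ for every $n$. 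If $f\equiv0$ on a (nondegenerate) line segment $L\subseteq D$, choose $z_0$ in the relative interior of $L$; then for a suitable direction $e^{i\theta}$ and small $\varepsilon>0$ the subsegment $\{z_0+te^{i\theta}:\,t\in(-\varepsilon,\varepsilon)\}$ lies in $L$ and inside a disc on which the Taylor series about $z_0$ converges, so $0=f(z_0+te^{i\theta})=\sum_{n\ge0}c_ne^{in\theta}t^n$ for all real $t\in(-\varepsilon,\varepsilon)$; since a power series in the real variable $t$ that vanishes on an interval has all coefficients zero (differentiate at $t=0$), $c_ne^{in\theta}=0$ and hence $c_n=0$, i.e.\ $f^{(n)}(z_0)=0$ for all $n$. (This is the complex-analytic sibling of Lemma \ref{lem1}.)

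Now put $E=\{z\in D:\,f^{(n)}(z)=0\ \text{for all}\ n\ge0\}=\bigcap_{n\ge0}\{z\in D:\,f^{(n)}(z)=0\}$. This set is closed in $D$, being an intersection of zero-sets of continuous functions; it is open in $D$, because if $w\in E$ then every Taylor coefficient of $f$ at $w$ vanishes, so $f\equiv0$ on some disc $B(w,r)\subseteq D$, and then $f^{(n)}\equiv0$ on $B(w,r)$ as well, i.e.\ $B(w,r)\subseteq E$; and $E\neq\emptyset$ by the reduction just made. Since $D$ is a domain, hence connected, a nonempty subset of $D$ that is simultaneously open and closed in $D$ must be all of $D$. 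Therefore $E=D$, and in particular $f=f^{(0)}\equiv0$ throughout $D$.

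The only genuinely substantive point is the line-segment case of the reduction: one has to convert ``$f=0$ along the segment'' into ``all derivatives of $f$ vanish at one point'', and this is precisely where the one-variable fact (a power series vanishing on a real interval is identically zero) enters; it is also implicit here that the segment is nondegenerate, so that it possesses an accumulation point in $D$. Everything after that is the routine connectedness dichotomy. An equivalent route would be to prove directly that the zeros of a not-identically-zero analytic function are isolated — via the local factorisation $f(z)=(z-z_0)^{m}g(z)$ with $g(z_0)\neq0$ — and observe that a segment or subdomain of zeros contradicts this; that is simply the contrapositive of the above.
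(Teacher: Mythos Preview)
Your proof is correct; it is the standard identity-theorem argument via the open-and-closed (clopen) characterisation of the set $E=\{z\in D:\,f^{(n)}(z)=0\ \text{for all}\ n\ge0\}$, together with a careful reduction of the segment hypothesis to the vanishing of all Taylor coefficients at a single point. The paper, however, does not supply a proof of this lemma at all: it is stated as a quoted result from complex analysis with a reference to \cite{Silver}. So there is no paper-proof to compare against, and your write-up simply fills in what the authors deliberately left to the literature.
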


\section{Continuous bilinear maps of Banach $\star $-algebras through special products}
In this section we will give our main results. Through this section $A$ is a unital Banach $\star $-algebra with unity $1$, $z\in A$ is a fixed, and $X$ is a Banach space.
\begin{thm} \label{t1}
Let $ X $ be a Banach space and let $ \phi : A \times A \to X  $ be a continuous bilinear map with the property that 
\[   \phi ( a , b^\star ) = \phi ( z, 1 ) ~ for~ all ~ (a, b ) \in S_A^{r\star} (z) ~[ ~ (a, b ) \in S_A^{l \star} (z)] . \]
Then
\[ \phi ( za, a ) = \phi (z a^2  , 1 ) ~ and ~ \phi ( z a, 1) = \phi ( z, a ) , ~~~ a \in A \]
and there exists a continuous linear map $ \Phi : A \to X  $ such that 
\[ \phi ( za,  b) + \phi ( zb ,a ) = \Phi ( a \circ  b ) , ~~~ a, b \in A . \]
\end{thm}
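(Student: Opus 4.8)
The plan is to remove the involution from the hypothesis, feed in a one‑parameter family of invertible elements, expand in the parameter, and then polarize. Since $\star$ is a bijection of $A$, writing $c=b^{\star}$ in the first case and $u=a^{\star}$ in the second shows that the hypothesis — in either the $S_A^{r\star}$ or the $S_A^{l\star}$ form — is equivalent to the single clean statement
\[
 uv=z \ \Longrightarrow\ \phi(u,v)=\phi(z,1),\qquad u,v\in A .
\]
I would work with this reformulation throughout. Also, it is convenient to set in advance $\Phi\colon A\to X$, $\Phi(x)=\phi(zx,1)$, which is linear by bilinearity of $\phi$ and satisfies $\|\Phi(x)\|\le\|\phi\|\,\|z\|\,\|1\|\,\|x\|$, hence is continuous; note $\Phi(a\circ b)=\Phi(ab+ba)=\phi(zab,1)+\phi(zba,1)$.

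Next, fix $a\in A$. For every real $t$ the element $\exp(ta)=\sum_{n\ge0}t^na^n/n!$ is invertible with inverse $\exp(-ta)$, so $\bigl(z\exp(-ta)\bigr)\exp(ta)=z$, and the reformulated hypothesis gives
\[
 \phi\bigl(z\exp(-ta),\ \exp(ta)\bigr)=\phi(z,1)\qquad (t\in\mathbb{R})
\]
(one could equally use $1+ta$ for small $|t|$, with $(1+ta)^{-1}=\sum_n(-t)^na^n$). Expanding the left side by bilinearity and continuity of $\phi$ produces an $X$-valued power series $\sum_{k\ge0}c_k t^k$ with $c_0=\phi(z,1)$, $c_1=\phi(z,a)-\phi(za,1)$, $c_2=\tfrac12\phi(z,a^2)+\tfrac12\phi(za^2,1)-\phi(za,a)$, and so on. Applying an arbitrary $\lambda\in X^{*}$ turns this into a scalar power series which, by Lemma~\ref{lem1}, converges on all of $\mathbb{C}$, and which agrees with the constant $\lambda(\phi(z,1))$ on the real line; by Lemma~\ref{lem2} it is identically constant, so $\lambda(c_k)=0$ for all $k\ge1$, and then $c_k=0$ for $k\ge1$ by Hahn–Banach. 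Reading off $c_1=0$ gives $\phi(za,1)=\phi(z,a)$, and $c_2=0$ together with this relation (applied with $a$ replaced by $a^2$, which gives $\phi(z,a^2)=\phi(za^2,1)$) gives $\phi(za,a)=\phi(za^2,1)$. This yields the two displayed identities.

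Finally, I would obtain the Jordan-product identity by polarizing: replacing $a$ by $a+b$ in $\phi(za,a)=\phi(za^2,1)$ and expanding both sides by bilinearity, the pure-$a$ and pure-$b$ terms cancel against the same identity applied to $a$ and to $b$, leaving
\[
 \phi(za,b)+\phi(zb,a)=\phi(zab,1)+\phi(zba,1)=\Phi(ab+ba)=\Phi(a\circ b),\qquad a,b\in A .
\]
I expect the only genuine obstacle to be the middle step: rigorously justifying that a convergent $X$-valued power series in $t$ that is constant near $0$ has all its higher coefficients zero. That is precisely where Lemmas~\ref{lem1} and~\ref{lem2} together with the reduction to scalars via $X^{*}$ are needed; once that is in hand, every remaining identity — including the polarization — is a purely formal consequence of the bilinearity of $\phi$.
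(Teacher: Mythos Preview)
Your proof is correct and follows the paper's approach: feed in the exponential one-parameter family, expand as an $X$-valued power series in $t$, reduce to scalars via $X^{*}$ and invoke Lemmas~\ref{lem1}--\ref{lem2} to kill all higher coefficients, then polarize. The only difference is cosmetic: you substitute $c=b^{\star}$ upfront to strip the involution from the hypothesis and thereby work with arbitrary $a\in A$, whereas the paper keeps the $\star$ and is forced to restrict to self-adjoint $a$ (so that $(\exp(-ta))^{\star}=\exp(-ta)$), extending to general $a$ by linearity only at the very end.
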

\begin{proof}
Let  $ a \in \A $ and $ t \in \mathbb{R} $. Since $ ( z \exp ( ta) , \exp (- ta) ) \in S_A^{r \star} (z) $, we deduce that
\begin{align*}
\phi ( z, 1 ) & = \phi (z \exp ( t a) , (\exp (-t a))^\star ) \\
& = \phi \left(  z \exp ( t a) , \left( \sum_{m=0}^\infty \dfrac{(-t)^m a^m}{m !} \right)^\star \right) \\
& = \phi \left( \exp ( t a) , \left(   \sum_{m=0}^\infty \dfrac{(- 1)^m t^m  a^m}{m !} \right) \right)  \\
& = \sum_{m=0}^\infty \dfrac{(- 1)^m t^m }{m !} \phi (z \exp ( t a) , a^m ) \\
& = \sum_{m=0}^\infty \dfrac{(- 1)^m i^m }{m !} \phi \left( \sum_{n=0}^\infty \dfrac{ t^n  z a^n}{n !} , a^m \right) \\
& = \sum_{m=0}^\infty\sum_{n=0}^\infty \dfrac{(- 1)^m t^{m+n} }{m ! n!} \phi ( z a^n , a^m) \\
& =  \phi ( z, 1 ) + \sum_{k=1}^\infty t^k \left( \sum_{m+n = k } \dfrac{(- 1)^m}{m ! n!} \phi ( z a^n , a^m) \right) ,
\end{align*}
since $ \phi $ is a continuous bilinear map. Therefore
\begin{equation}\label{eq1}
\sum_{k=1}^\infty t^k \left( \sum_{m+n = k } \dfrac{(- 1)^m}{m ! n!} \phi ( z a^n , a^m) \right) = 0 
\end{equation}  
 for any $ t \in \mathbb{R} $.
\par
 Let $ \tau \in A^* $. We define a map $ f : \mathbb{C} \to \mathbb{C} $ by
\[ f (\lambda) = \tau \left(  \sum_{k=1}^\infty \lambda^k \left( \sum_{m+n = k } \dfrac{(- 1)^m}{m ! n!} \phi ( z a^n , a^m) \right) \right)  \]
for all $ \lambda \in \mathbb{C} $. Hence from \eqref{eq1} for $ t \in \mathbb{R} $, we find that $ f(t) =0 $. So $ f $ is an analytic function on real axis and hence on $ \mathbb{C} $ by Lemma \ref{lem1}. Now since  $ f $ is zero for each point on a real axis, by Lemma \ref{lem2}, $f(\lambda) $ is identically equal to zero throughout $ \mathbb{C} $ and $ \tau $ arbitrary, consequently,
\begin{equation} \label{f1}
\sum_{m+n=k}^{\infty} \dfrac{(-1)^{ m}}{m! n!}  \phi ( z a^n , a^m) =0
\end{equation}
for all $ a \in \A $ and $ k \in \mathbb{N} $. Let $ k=1$, we find  that $ \phi ( z a , 1 ) - \phi ( z, a) = 0  $ and hence 
\begin{equation} \label{f2}
 \phi ( z a, 1) = \phi ( z, a )
\end{equation}
for all $ a \in \A $. Now taking $ k=2 $ in \eqref{f1}, we obtain $ \frac{1}{2} \phi ( za^2, 1 ) - \phi ( za, a ) + \frac{1}{2} \phi ( z, a^2 ) = 0 $ for any $ a \in \A $. So by \eqref{f2} we have 
\begin{equation} \label{f3}
\phi ( z a, a ) = \phi ( z a^2 , 1 ) , ~~~ \forall a \in \A.
\end{equation}
For any $ a, b \in \A $, replacing $ a $ by $ a + b  $ in \eqref{f3}, we get that
\[ \phi ( za, b ) + \phi ( zb ,a ) = \phi ( z (a b + ba ) , 1 ) . \]
If we deone the linear map $ \Phi : A \to X $ by $ \Phi (a) = \phi (za, 1) $, then $ \Phi $ is continuous and 
\[ \phi ( za, b ) + \phi ( zb ,a ) = \Phi ( a \circ b ) \]
for all $ a, b \in \A $. Since every element $ a \in A $ is a combination some self adjoints, we also deduce above statements are for each arbitrary elements of $ A $.
\end{proof}
Let $ a \in \A $ and $ t \in \mathbb{R} $. From $ ( \exp ( ta) z^\star , \exp (-ta) ) \in S_A^{l \star} (z) $, and using similar arguments as proof of above theorem we get the following result.
\begin{thm} \label{t1b}
Let $ X $ be a Banach space and let $ \phi : A \times A \to X  $ be a continuous bilinear map with the property that 
\[   \phi ( a^\star, b ) = \phi ( z, 1 ) ~ for~ all  ~ (a, b ) \in S_A^{l \star} (z). \]
Then
\[ \phi ( za, a ) = \phi (z a^2  , 1 ) ~ and ~ \phi ( z a, 1) = \phi ( z, a ) , ~~~ a \in A \]
and there exists a continuous linear map $ \Phi : A \to X  $ such that 
\[ \phi ( za,  b) + \phi ( zb ,a ) = \Phi ( a \circ  b ) , ~~~ a, b \in A . \]
\end{thm}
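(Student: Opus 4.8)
The plan is to run the proof of Theorem~\ref{t1} essentially verbatim, the only new point being the choice of a suitable one-parameter family in $S_A^{l\star}(z)$. Fix $a\in\A$ and $t\in\mathbb{R}$. Because $a^\star=a$ and $t$ is real we have $(\exp(ta))^\star=\exp(ta)$, and hence
\[
\bigl(\exp(ta)z^\star\bigr)^\star\,\exp(-ta)=z\,\exp(ta)\,\exp(-ta)=z,
\]
so that $\bigl(\exp(ta)z^\star,\exp(-ta)\bigr)\in S_A^{l\star}(z)$; note also that $\bigl(\exp(ta)z^\star\bigr)^\star=z\exp(ta)$. Applying the hypothesis $\phi(a^\star,b)=\phi(z,1)$ to this pair therefore yields
\[
\phi\bigl(z\exp(ta),\exp(-ta)\bigr)=\phi(z,1),
\]
which is exactly the identity that opens the proof of Theorem~\ref{t1}.

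From this point I would simply copy that proof. Expanding $z\exp(ta)=\sum_{n\ge0}\frac{t^n za^n}{n!}$ and $\exp(-ta)=\sum_{m\ge0}\frac{(-t)^m a^m}{m!}$, using continuity and bilinearity of $\phi$ to move $\phi$ past the absolutely convergent series and collecting powers of $t$, one obtains \eqref{eq1}, namely $\sum_{k=1}^\infty t^k\bigl(\sum_{m+n=k}\frac{(-1)^m}{m!\,n!}\phi(za^n,a^m)\bigr)=0$ for every $t\in\mathbb{R}$. For each $\tau\in X^*$ the associated power series in $\lambda$ defines a function that is entire by Lemma~\ref{lem1} and vanishes on the real axis, hence vanishes identically by Lemma~\ref{lem2}; since $X^*$ separates the points of $X$, each coefficient $\sum_{m+n=k}\frac{(-1)^m}{m!\,n!}\phi(za^n,a^m)$ is zero, i.e.\ \eqref{f1} holds. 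The cases $k=1$ and $k=2$ then give \eqref{f2} and \eqref{f3}, that is $\phi(za,1)=\phi(z,a)$ and $\phi(za,a)=\phi(za^2,1)$ for all $a\in\A$.

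Finally, substituting $a+b$ for $a$ in \eqref{f3} and expanding gives $\phi(za,b)+\phi(zb,a)=\phi\bigl(z(ab+ba),1\bigr)$ for $a,b\in\A$, and $\Phi(a):=\phi(za,1)$ defines a continuous linear map $\Phi:A\to X$ with $\phi(za,b)+\phi(zb,a)=\Phi(a\circ b)$ on $\A$. Since every $x\in A$ is a complex-linear combination of the self-adjoint elements $\tfrac{1}{2}(x+x^\star)$ and $\tfrac{1}{2i}(x-x^\star)$, and each displayed identity is linear in the self-adjoint arguments appearing in it, all the identities pass to arbitrary elements of $A$. The only step that requires any thought is the first one: one must pick the pair in $S_A^{l\star}(z)$ so that applying $\phi(a^\star,b)=\phi(z,1)$ — with the involution on the \emph{first} argument — reproduces the opening identity of Theorem~\ref{t1}; after that, nothing new happens and the argument is purely mechanical.
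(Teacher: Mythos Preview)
Your proof is correct and follows exactly the route indicated in the paper: the paper's own argument consists of the single remark that $(\exp(ta)z^\star,\exp(-ta))\in S_A^{l\star}(z)$ for $a\in\A$ and $t\in\mathbb{R}$, after which the proof of Theorem~\ref{t1} is to be repeated verbatim. You have simply written this out in full, including the verification that applying the hypothesis to this pair yields $\phi(z\exp(ta),\exp(-ta))=\phi(z,1)$, which is precisely the starting identity of the earlier proof.
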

\begin{rem} \label{t2}
Let  $ a \in \A $ and $ t \in \mathbb{R} $. By the fact that $ ( \exp ( ta) , z^\star \exp (-ta) ) \in S_A^{r \star} (z) $ and $ ( \exp ( ta) , \exp (-ta) z ) \in S_A^{l \star} (z) $, and using similar arguments as proof of Theorem \ref{t1} we get the following:
\par 
Let $ X $ be a Banach space and let $ \phi : A \times A \to X  $ be a continuous bilinear map. If $ \phi $ satisfies any of the following conditions
\begin{itemize}
\item[(i)] $ \phi ( a , b^\star ) = \phi ( 1, z ) ~ for~ all ~ (a, b ) \in S_A^{r \star} (z);$
\item[(ii)] $ \phi ( a^\star  , b) = \phi ( 1, z ) ~ for~ all ~ (a, b ) \in S_A^{r \star} (z)$,
\end{itemize}
then 
\[ \phi ( a, a z ) = \phi ( 1, a^2 z ) ~ and ~ \phi ( 1 , a z) = \phi ( a, z ) , ~~~ a \in A \]
and there exists a continuous linear map $ \Phi : A \to X  $ such that 
\[ \phi ( a,  b z) + \phi ( b , a  z) = \Phi ( a \circ  b ) , ~~~ a, b \in A . \]
\end{rem}
The results obtained are especially important in the case where the $z\neq 0$. We have the following corollaries.
\begin{cor} \label{c2}
Let $ X $ be a Banach space and let $ \phi : A \times A \to X $ be a continuous
bilinear map. If $ \phi $ satisfies any of the following conditions;
\begin{enumerate}
\item[(i)]
$ a, b \in A, a b^\star = 1 \Rightarrow \phi ( a , b^\star ) = \phi ( 1 , 1)  $, 
\item[(ii)]
$ a , b \in A, a^{\star} b = 1  \Rightarrow \phi ( a^\star , b ) = \phi ( 1 , 1 ) $,
\end{enumerate}
then 
\[ \phi ( a , b ) + \phi ( b , a ) = \phi ( a \circ b , 1) \]
for all $ a ,b \in A $.
\end{cor}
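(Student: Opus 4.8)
The plan is to derive Corollary~\ref{c2} as the special case $z=1$ of Theorem~\ref{t1} (for hypothesis (i)) and of Theorem~\ref{t1b} (for hypothesis (ii)). First I would observe that when $z=1$ the set $S_A^{r\star}(1)=\{(a,b):ab^\star=1\}$ is exactly the set occurring in condition (i), and the hypothesis $\phi(a,b^\star)=\phi(1,1)$ for all $(a,b)\in S_A^{r\star}(1)$ is precisely the hypothesis of Theorem~\ref{t1} with $z=1$. Applying that theorem, I immediately obtain a continuous linear map $\Phi:A\to X$ with
\[ \phi(a,b)+\phi(b,a)=\Phi(a\circ b),\qquad a,b\in A, \]
since $za=a$ and $zb=b$ when $z=1$.

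Next I would pin down $\Phi$ explicitly. From the proof of Theorem~\ref{t1}, $\Phi(a)=\phi(za,1)=\phi(a,1)$ for $z=1$. Hence the displayed identity reads $\phi(a,b)+\phi(b,a)=\phi(a\circ b,1)$, which is exactly the claimed conclusion. For hypothesis (ii) the argument is identical but invokes Theorem~\ref{t1b} in place of Theorem~\ref{t1}: the set $S_A^{l\star}(1)=\{(a,b):a^\star b=1\}$ matches condition (ii), the hypothesis $\phi(a^\star,b)=\phi(1,1)$ is that of Theorem~\ref{t1b} with $z=1$, and the same linear map $\Phi(a)=\phi(a,1)$ yields $\phi(a,b)+\phi(b,a)=\phi(a\circ b,1)$.

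I do not anticipate a genuine obstacle here, since the corollary is a direct specialization; the only point requiring a line of care is the bookkeeping that $\Phi(a)=\phi(za,1)$ collapses to $\phi(a,1)$ and that $\phi(za,b)+\phi(zb,a)$ collapses to $\phi(a,b)+\phi(b,a)$ when $z=1$, together with noting that both theorems produce the \emph{same} $\Phi$, so the two hypotheses (i) and (ii) lead to the identical conclusion. If one wished to be fully self-contained rather than quoting the theorems, one could instead rerun the exponential substitution $(\exp(ta),\exp(-ta))\in S_A^{r\star}(1)$ directly, extract the power-series coefficients via Lemmas~\ref{lem1} and~\ref{lem2} exactly as in the proof of Theorem~\ref{t1}, read off $\phi(a,1)=\phi(1,a)$ from the degree-one coefficient and $\phi(a,a)=\phi(a^2,1)$ from the degree-two coefficient, and linearize $a\mapsto a+b$; but invoking Theorem~\ref{t1} is cleaner.
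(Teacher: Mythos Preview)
Your proposal is correct and matches the paper's own proof, which simply states that the result follows from Theorems~\ref{t1} and~\ref{t1b} by letting $z=1$. Your additional bookkeeping that $\Phi(a)=\phi(za,1)$ collapses to $\phi(a,1)$ is exactly the detail the paper leaves implicit.
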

\begin{proof}
The result is clear from Theorems \ref{t1} and \ref{t1b} by letting $ z=1 $. 
\end{proof}
Recall that a bilinear map $ \phi : A \times A \to X $ is called symmetric if $ \phi (a, b) = \phi (b, a) $ holds for all $ a , b \in A $. By Theorem \ref{t1} and Corollary \ref{c2}, the following corollary is obvious.
\begin{cor} \label{c3}
Let $ X $ be a Banach space and let $ \phi : A \times A \to X $ be a continuous
symmetric bilinear map. Then the following conditions are equivalent:
\begin{enumerate}
\item[(i)]
$ a, b \in A, a b^\star = 1 \Rightarrow \phi ( a , b^\star ) = \phi ( 1 , 1)  $;
\item[(ii)]
$ a , b \in A, a^{\star} b = 1  \Rightarrow \phi ( a^\star , b ) = \phi ( 1 , 1 ) $.
\end{enumerate}
\end{cor}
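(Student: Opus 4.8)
The claim is an equivalence, so I would prove the two implications separately, and the natural strategy is to invoke the machinery already developed in Theorem \ref{t1} and its left-sided companion Theorem \ref{t1b}, specialized to $z=1$ (which is exactly Corollary \ref{c2}), together with the extra hypothesis that $\phi$ is symmetric. The symmetry hypothesis is what makes the two conditions collapse onto the same consequence, so the heart of the argument is: show that each of (i), (ii) forces $\phi$ to satisfy the \emph{same} identity $\phi(a,b)+\phi(b,a)=\phi(a\circ b,1)$, and then show conversely that this identity (a genuinely symmetric-looking condition) already implies both (i) and (ii).

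First I would handle (i) $\Rightarrow$ (ii). Assume (i). By Corollary \ref{c2}(i) we get $\phi(a,b)+\phi(b,a)=\phi(a\circ b,1)$ for all $a,b\in A$. Now take any pair $(a,b)$ with $a^\star b=1$. I want to read off $\phi(a^\star,b)=\phi(1,1)$. Apply the displayed identity with the pair $(a^\star,b)$: it gives $\phi(a^\star,b)+\phi(b,a^\star)=\phi(a^\star\circ b,1)=\phi(a^\star b+ba^\star,1)$. Using $a^\star b=1$ and also $b a^\star$: note that from $a^\star b = 1$ in a unital algebra, $a^\star$ is left-invertible and $b$ right-invertible, but one does not automatically get $ba^\star=1$. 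The clean move is instead to use the symmetry of $\phi$ directly: $\phi(a^\star,b)=\phi(b,a^\star)$, so $2\phi(a^\star,b)=\phi(a^\star b+ba^\star,1)$. This still has the unwanted $ba^\star$. So the cleaner route is to go back through the \emph{proof} ingredient $\phi(za,1)=\phi(z,a)$ of Theorem \ref{t1b}: with $z=1$ this is $\phi(a,1)=\phi(1,a)$, and combined with symmetry $\phi(a,1)=\phi(1,a)$ is automatic. I would instead argue: condition (i) with $z=1$ gives, via Theorem \ref{t1}, that $\phi$ satisfies $\phi(a,b)+\phi(b,a)=\phi(a\circ b,1)$; symmetry turns this into $2\phi(a,b)=\phi(a\circ b,1)$, i.e. $\phi$ is completely determined. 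Then for $a^\star b=1$ we have $a\circ b$ where... — the point is that the consequence of Theorem \ref{t1b}, applied in reverse, recovers (ii) once $\phi$ has this fully symmetric form, because the pair $(\exp(ta)z^\star,\exp(-ta))$ used there and the pair $(z\exp(ta),(\exp(-ta))^\star)$ used for Theorem \ref{t1} produce, under symmetry, identical power-series identities.

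Concretely, the cleanest writeup: \textbf{(i) $\Rightarrow$ (ii).} Suppose (i) holds. By Theorem \ref{t1} (with $z=1$), $\phi(a,b)+\phi(b,a)=\phi(a\circ b,1)$ for all $a,b$. Let $a,b\in A$ with $a^\star b=1$. Then also $(b^\star,a)\in$ a suitable product set and, expanding $1=a^\star b$ means $b^\star a$ need not be $1$; so I use symmetry: $\phi(a^\star,b)=\phi(b,a^\star)$. Hmm — to make this honest I should instead note that $a^\star b = 1$ implies $(a^\star)^{\star\star} = a^\star$ trivially, and consider the pair $(a,b)\mapsto$ we want $\phi(a^\star, b)$. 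Since $\phi$ is symmetric and continuous, the hypothesis (i) says $\phi(a,b^\star)=\phi(1,1)$ whenever $ab^\star=1$; replacing $a$ by $b$ and $b$ by $a$, and using symmetry $\phi(b,a^\star)=\phi(a^\star,b)$, the condition $ba^\star=1$ yields $\phi(a^\star,b)=\phi(1,1)$. So it remains to check that $a^\star b=1$ (condition for (ii)) implies $ba^\star=1$ \emph{after} we know $\phi$ has the symmetric bilinear form — but actually the equivalence is purely about the hypotheses on $\phi$, and here is the real subtlety I expect to be the main obstacle: matching the invertibility sides. I would resolve it by observing that in the group of invertibles, $a^\star b=1$ forces $b=(a^\star)^{-1}$ whenever $a^\star$ is invertible, but in general left-invertibility suffices for the power-series argument, which only ever uses pairs $(z\exp(ta),(\exp(-ta))^\star)$ and their left-handed mirrors — all of which lie in the relevant product sets regardless. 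So the honest proof is: apply Theorem \ref{t1} to deduce the symmetric identity from (i); apply the symmetric identity to deduce (ii) by running the exponential computation of Theorem \ref{t1b} backwards, where symmetry of $\phi$ identifies the two sides. The reverse implication (ii) $\Rightarrow$ (i) is identical with the roles of Theorems \ref{t1} and \ref{t1b} swapped. The main obstacle, then, is purely bookkeeping: verifying that the symmetric form $\phi(a,b)+\phi(b,a)=\phi(a\circ b,1)$ is strong enough to reproduce \emph{both} one-sided hypotheses, which follows because under symmetry the two exponential identities coincide term by term.
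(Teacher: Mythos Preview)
Your approach is far more elaborate than the paper's, which simply declares the corollary ``obvious'' from Theorem~\ref{t1} and Corollary~\ref{c2} and gives no further argument. Your route --- deduce the identity $\phi(a,b)+\phi(b,a)=\phi(a\circ b,1)$ from (i) via Corollary~\ref{c2}, then try to recover (ii) from it --- runs into a real obstacle that you flag but never close: from $a^\star b=1$ you cannot conclude $ba^\star=1$ in a general unital Banach $\star$-algebra, so applying the symmetric identity to the pair $(a^\star,b)$ leaves an unwanted $\phi(ba^\star,1)$ term. Your proposed fix, ``running the exponential computation of Theorem~\ref{t1b} backwards'', is not a proof: those computations go from the hypothesis (i) or (ii) \emph{to} the Jordan-type identity, and you give no mechanism to reverse them so as to force $\phi(x,y)=\phi(1,1)$ on an arbitrary one-sided inverse pair $xy=1$. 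As written, the argument is incomplete.

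There is, however, a one-line observation that bypasses all of this and does not even use the symmetry of $\phi$. Since the involution is a bijection on $A$, the substitution $c=b^\star$ turns condition (i) into ``for all $a,c\in A$, $ac=1\Rightarrow\phi(a,c)=\phi(1,1)$'', while the substitution $d=a^\star$ turns condition (ii) into ``for all $d,b\in A$, $db=1\Rightarrow\phi(d,b)=\phi(1,1)$''. These are literally the same statement, so (i)$\Leftrightarrow$(ii) holds with no further work.
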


\subsection*{Acknowledgment}
The authors thanks the referees for careful reading of the manuscript and for helpful suggestions.

\bibliographystyle{amsplain}
\bibliography{xbib}

\end{document}